\theoremstyle{plain}
\newtheorem{corollary}{\bf Corollary}
\newtheorem{lemma}{\bf Lemma}
\newtheorem{proposition}{\bf Proposition}
\newtheorem{remark}{Remark}
\newtheorem{theorem}{\bf Theorem}
\newtheorem{conjecture}{Conjecture}{}
\numberwithin{equation}{section}
\newcommand\dm{\mathrm{dm}}
\newcommand\dn{\mathrm{d\mu}}
\newcommand{\g}[2]{\mbox{$\langle #1 ,#2 \rangle$}}
\newcommand\dv{\mathrm{div}}
\newcommand\dM{\mathrm{dM}}
\begin{document}

\title[Eigenvalue estimates for a class of elliptic differential operators]{Eigenvalue estimates for a class of elliptic differential operators
in divergence form}

\author{Jos\'e. N.V. Gomes$^1$}
\author{Juliana F.R. Miranda$^2$}
\address{$^{1,2}$Departamento de Matem\'atica, Universidade Federal do Amazonas, Av. General Rodrigo Oct\'avio, 6200, 69080-900 Manaus, Amazonas, Brazil.}
\address{$^1$Current: Department of Mathematics, Lehigh University, Christmas-Saucon Hall, 14 East Packer Avenue, Bethlehem, Pennsylvania 18015-3174, US.}

\email{$^1$jov217@lehigh.edu, jnvgomes@pq.cnpq.br}
\email{$^2$jfrmiranda@ufam.edu.br}

\urladdr{$^1$https://www1.lehigh.edu}
\urladdr{$^{1,2}$http://dmice.ufam.edu.br}

\keywords{Drifting Laplacian, Eigenvalues, Elliptic Operator, Immersions.}
\subjclass[2010]{Primary 35P15; Secondary 53C42, 58J50.}

\begin{abstract}
We compute estimates for eigenvalues of a class of linear second-order elliptic differential operators in divergence form (with Dirichlet boundary condition) on a bounded domain in a complete Riemannian manifold. Our estimates are based upon the Weyl's asymptotic formula. As an application, we find a lower bound for the mean of the first k eigenvalues of the drifting Laplacian. In particular, we have extended for this operator a partial solution given by Cheng and Yang for the generalized conjecture of P\'olya. We also derive a second-Yang type inequality due to Chen and Cheng, and other two inequalities which generalize results by Cheng and Yang obtained for a domain in the unit sphere and for a domain in the projective space.
\end{abstract}
\maketitle

\section{Introduction}
In this paper $(M,\langle,\rangle)$ is a complete Riemannian manifold and the domain $\Omega\subset M$ is connected bounded with smooth boundary $\partial\Omega$ in $M$. Let $T$ be a symmetric positive definite $(1,1)$--tensor on $M$ and $\eta\in\mathcal{C}^2(M)$. We are interested in studying the eigenvalue problem with Dirichlet boundary condition, namely:
\begin{equation*}
\left\{
\begin{array}{ccccc}
-\mathscr{L}u &=& \lambda u &\hbox{in} & \Omega,\\
u&=& 0 & \hbox{on} & \partial\Omega,
\end{array}
\right.
\end{equation*}
where
\begin{equation}\label{defLintro}
\mathscr{L}u=\dv (T(\nabla u))-\langle\nabla\eta, T(\nabla u)\rangle.
\end{equation}

In this more general setting, we apply known techniques to prove our first result.

\begin{theorem}\label{propCYT-c3}
Let $\Omega$ be a domain in an $n$-dimensional complete Riemannian manifold $M$ isometrically immersed in $\mathbb{R}^m$, $\lambda_i$ be the $i$-th eigenvalue of  $\mathscr{L}$ and $u_i$ its corresponding normalized real-valued eigenfunction. Then
\begin{eqnarray*}
\mathrm{tr}(T)\sum_{i=1}^k(\lambda_{k+1}-\lambda_i)^2 &\leq& \sum_{i=1}^k(\lambda_{k+1}-\lambda_i)\Big((m-n)^2A_0^2T_*^2+(T_0+T_*\eta_0)^2\\
&&+4(T_0+T_*\eta_0)||T(\nabla u_i)||_{L^2(\Omega,\dm)}+4\lambda_i\Big),
\end{eqnarray*}
where $A_0=\max\{\sup_{\bar{\Omega}}|A_{e_k}|,\, k=n+1,\ldots,m\}$, $A_{e_k}$ is the Weingarten operator of the immersion with respect to $e_k$, $\eta_0=\sup_{\bar{\Omega}}|\nabla\eta|$, $T_*=\sup_{\bar{\Omega}}|T|$ and $T_0=\sup_{\bar{\Omega}}|\mathrm{tr}(\nabla T)|$.
\end{theorem}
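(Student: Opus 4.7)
The strategy is to combine a Yang--Xia--Xu type abstract inequality for $\mathscr{L}$ with the Nash-embedding trick: using the ambient coordinate functions $y^{1},\dots,y^{m}$ of the isometric immersion $M\hookrightarrow\mathbb{R}^{m}$ as trial functions, summing the resulting inequality over $\alpha$, and then exploiting the reproducing-frame identities satisfied by $\{\nabla y^{\alpha}\}$. First I would record the commutator identity
\begin{equation*}
\mathscr{L}(hu)=h\,\mathscr{L}u+2\langle T\nabla h,\nabla u\rangle+u\,\mathscr{L}h,
\end{equation*}
which follows from the symmetry of $T$, and then invoke the Xia--Xu-type lemma (the natural $\mathscr{L}$-analogue of (\ref{introd-I12-c3}), expected to be established earlier in the paper),
\begin{equation*}
\sum_{i=1}^{k}(\Lambda_{i}^{1})^{2}\int_{\Omega}u_{i}^{2}\langle T\nabla h,\nabla h\rangle\,\dm\leq\sum_{i=1}^{k}\Lambda_{i}^{1}\int_{\Omega}\bigl(2\langle T\nabla h,\nabla u_{i}\rangle+u_{i}\,\mathscr{L}h\bigr)^{2}\dm,
\end{equation*}
which is derived by inserting $\phi_{i}=hu_{i}-\sum_{j=1}^{k}a_{ij}u_{j}$, with $a_{ij}=\int hu_{i}u_{j}\,\dm$, into the variational characterization of $\lambda_{k+1}$ and applying Cauchy--Schwarz.

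Next I would take $h=y^{\alpha}$ and sum over $\alpha=1,\dots,m$. Two reproducing identities, both stemming from the fact that $\sum_{\alpha}\nabla y^{\alpha}\otimes\nabla y^{\alpha}$ reconstructs the metric on $TM$, drive the computation: $\sum_{\alpha}\langle T\nabla y^{\alpha},\nabla y^{\alpha}\rangle=\operatorname{tr}(T)$ and $\sum_{\alpha}\langle T\nabla y^{\alpha},X\rangle\langle T\nabla y^{\alpha},Y\rangle=\langle T^{2}X,Y\rangle$. Setting $Z:=\operatorname{div}(T)-T\nabla\eta$, and using that the Hessian $\nabla^{2}y^{\alpha}$ coincides with the $\alpha$-component of the second fundamental form $B$, a direct calculation yields the decomposition
\begin{equation*}
\mathscr{L}y^{\alpha}=\langle Z,\nabla y^{\alpha}\rangle+\langle V,e_{\alpha}\rangle,\qquad V:=\sum_{i,j}T_{ij}\,B(e_{i},e_{j}),
\end{equation*}
in which $Z$ is tangential whereas $V$ is normal. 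Squaring, summing over $\alpha$, and invoking the reproducing identities collapses every tangential/normal cross-pairing, leaving the clean identity
\begin{equation*}
\sum_{\alpha}\bigl(2\langle T\nabla y^{\alpha},\nabla u_{i}\rangle+u_{i}\,\mathscr{L}y^{\alpha}\bigr)^{2}=\bigl|2T\nabla u_{i}+u_{i}Z\bigr|^{2}+u_{i}^{2}|V|^{2}.
\end{equation*}

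To finish I would integrate, using $\int_{\Omega}u_{i}^{2}\,\dm=1$ and $\int_{\Omega}\langle T\nabla u_{i},\nabla u_{i}\rangle\,\dm=\lambda_{i}$ (obtained by multiplying the eigenequation by $u_{i}$), together with the Cauchy--Schwarz estimate $\int_{\Omega}|u_{i}|\,|T\nabla u_{i}|\,\dm\leq\lambda_{i}^{1/2}$ for the cross term. The pointwise bounds $|Z|\leq T_{0}+T_{*}\eta_{0}$ and $|V|^{2}\leq(m-n)^{2}A_{0}^{2}T_{*}^{2}$ (the latter by writing $V=\sum_{k=n+1}^{m}\operatorname{tr}(T A_{e_{k}})\,e_{k}$ in an adapted normal frame and bounding each trace by $T_{*}A_{0}$) produce exactly the four terms on the right-hand side of the theorem; factoring $\operatorname{tr}(T)$ out of the left-hand side (interpreted as its infimum over $\bar{\Omega}$) yields the stated inequality. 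The main obstacle is the third step: one must verify the tangential/normal orthogonality $\langle Z,V\rangle=0$ and the vanishing of the mixed sum $\sum_{\alpha}\langle T\nabla y^{\alpha},\nabla u_{i}\rangle\langle V,e_{\alpha}\rangle$, both of which are crucial for the clean square identity above, and then carefully track the Cauchy--Schwarz constants for the second-fundamental-form contribution to realize the precise coefficient $(m-n)^{2}A_{0}^{2}T_{*}^{2}$.
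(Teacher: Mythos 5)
Your proposal follows essentially the same route as the paper: the Xia--Xu-type inequality you invoke is exactly the paper's Proposition \ref{Lem1T-c3} (proved with the same trial functions $\phi_i=hu_i-\sum_j c_{ij}u_j$ and Rayleigh--Ritz), and your subsequent use of the coordinate functions, the identities $\sum_\alpha\langle T\nabla y^\alpha,\nabla y^\alpha\rangle=\mathrm{tr}(T)$ and $\sum_\alpha\langle T\nabla y^\alpha,X\rangle^2=|TX|^2$, and the tangential/normal splitting $\mathrm{div}_\eta(T(\nabla x))=\mathrm{tr}(\alpha\circ T)+\mathrm{tr}(\nabla T)-T(\nabla\eta)$ reproduce the paper's Lemma \ref{lemmaCYT-c3} and the estimates in its proof of Theorem \ref{propCYT-c3}, including the bounds $|Z|\le T_0+T_*\eta_0$ and the $(m-n)^2A_0^2T_*^2$ bound for the normal part. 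Your final Cauchy--Schwarz step (bounding the cross term and the $4|T\nabla u_i|^2$ term by $\lambda_i^{1/2}$ and $\lambda_i$) rests on the same implicit inequality $\int_\Omega|T(\nabla u_i)|^2\dm\le\lambda_i$ that the paper itself uses, so it matches the published argument.
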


The drifting Laplacian case $L= \Delta -\langle\nabla\eta, \nabla\cdot\rangle$ is recovered when $T$ is the identity operator in $\mathfrak{X}(M)$. In Section~\ref{sec-eta-laplaciano} we work specifically in this case. The main results for $L$ are described in the following two theorems.

\begin{theorem}\label{thmA3-c3}
Let $\Omega$ be a domain in an $n$-dimensional complete Riemannian manifold $M$ isometrically immersed in $\mathbb{R}^m$ with mean curvature $H$, and $\lambda_i$ be the $i$-th eigenvalue of the drifting Laplacian. Then
\begin{equation*}
\frac{1}{k}\sum_{i=1}^k\upsilon_i\geq \frac{n}{\sqrt{(n+2)(n+4)}}\frac{4\pi^2}{(\omega_n \mathrm{vol}\,\Omega)^\frac{2}{n}}k^\frac{2}{n}, \quad\mbox{for}\quad k=1,2,\ldots
\end{equation*}
where $\upsilon_i:=\lambda_i +\frac{n^2H_0^2+\eta_0^2+2\bar{\eta_0}}{4}$, $\eta_0=\sup_{\bar{\Omega}}|\nabla \eta|$, $\bar{\eta_0}=\sup_{\bar{\Omega}}|L\eta|$ and $H_0=\sup_{\bar{\Omega}}\|H\|.$
\end{theorem}

Theorem~\ref{thmA3-c3} is an extension for $L$ of Theorem~1.1 in Cheng and Yang~\cite{Cheng-Yang-III}. In particular, we have extended for the drifting Laplacian a partial solution given by them to the generalized conjecture of P\'olya (see Conjecture~\ref{conjecture}).

\begin{theorem}\label{thmA2-c3}
Under the assumptions in Theorem~\ref{thmA3-c3}, we have
\begin{equation}\label{cor.ineq1-c3}
\upsilon_{k+1}\leq\frac{1}{k}\Big(1+\frac{4}{n}\Big)\sum_{i=1}^k\upsilon_i,
\end{equation}
\begin{equation}\label{cor.ineq2-c3}
\upsilon_{k+1}\leq\Big(1+\frac{2}{n}\Big)\frac{1}{k}\sum_{i=1}^k\upsilon_i+\Big[\Big(\frac{2}{n}\frac{1}{k}\sum_{i=1}^k\upsilon_i\Big)^2
-\Big(1+\frac{4}{n}\Big)\frac{1}{k}\sum_{j=1}^k\Big(\upsilon_j-\frac{1}{k}\sum_{i=1}^k\upsilon_i\Big)^2\Big]^\frac{1}{2},
\end{equation}
\begin{equation}\label{cor.ineq3-c3}
\upsilon_{k+1}-\upsilon_k\leq2\Big[\Big(\frac{2}{n}\frac{1}{k}\sum_{i=1}^k\upsilon_i\Big)^2
-\Big(1+\frac{4}{n}\Big)\frac{1}{k}\sum_{j=1}^k\Big(\upsilon_j-\frac{1}{k}\sum_{i=1}^k\upsilon_i\Big)^2\Big]^\frac{1}{2}.
\end{equation}
\end{theorem}

Estimate~\eqref{cor.ineq1-c3} is a second-Yang type inequality due to Chen and Cheng~\cite[Inequality~(1.5)]{chen-cheng}, whereas the other two estimates generalize results by Cheng and Yang~\cite{Cheng-Yang-II,Cheng-Yang-I} obtained for a domain in the unit sphere $\mathbb{S}^n(1)$ and for a domain in the projective space $\mathbb{CP}^n(4)$. More precisely, inequalities \eqref{cor.ineq2-c3} and \eqref{cor.ineq3-c3} generalize Theorem~1 and Corollary~1 in \cite{Cheng-Yang-II}, respectively, as well as Theorem~1 and Corollary~1 in \cite{Cheng-Yang-I}.

\section{Motivating the definition of the operator $\mathscr{L}$}\label{Sec-L}
In this section, we establish the necessary tools to work with the operator defined in \eqref{defLintro} which enable us to obtain more general results. We believe this operator would be also useful in obtaining rigidity results or characterizing some known geometric objects.

Given an $n$-dimensional Riemannian manifold $(M,\langle,\rangle)$, to each $X\in\mathfrak{X}(M)$ we associate the $(0,1)$--tensor $X^\flat:\mathfrak{X}(M)\to C^{\infty}(M)$, given by
\begin{equation*}
X^\flat(Y)=\langle X,Y\rangle\quad \mbox{for all}\quad Y\in\mathfrak{X}(M).
\end{equation*}

Let $\,^{\sharp}\!:\mathfrak{X}^*(M)\to \mathfrak{X}(M)$ be the musical isomorphism, i.e., the inverse of the canonical mapping $\,^{\flat}\!:\mathfrak{X}(M)\to\mathfrak{X}^*(M)$.

Throughout the paper, we will be constantly using the identification of a $(0,2)$--tensor $T$ with its associated $(1,1)$--tensor by the equation
\begin{equation*}\label{AAAC-c3}
\langle T(X),Y\rangle=T(X,Y).
\end{equation*}
In particular, the metric tensor $\langle,\rangle$ will be identified with the identity $I$  in $\mathfrak{X}(M).$

Let $\{e_1,\ldots,e_n\}$ be an orthonormal basis in $T_pM$, and $S$ be a $(1,1)$--tensor with adjoint $S^*$. Recall that the {\it Hilbert-Schmidt inner product} is given by
\begin{equation*}
\langle T,S\rangle:=\mathrm{tr}(TS^*) = \sum_{i}\langle TS^*(e_i),e_i\rangle=\sum_{i}\langle S^{*}(e_i),T^*(e_i)\rangle= \sum_i\langle T(e_i),S(e_i)\rangle.
\end{equation*}

The divergence of a $(1,1)$--tensor $T$ in $(M,\langle,\rangle)$ is defined as the $(0,1)$--tensor
\begin{equation*}
(\dv T)(v)(p)=\mathrm{tr}(w\mapsto(\nabla_{w}T)(v)(p)),
\end{equation*}
where $p\in M$, $v,w\in T_{p}M,$ $\nabla$ stands for the covariant derivative of $T$ and $\mathrm{tr}$ is the trace operator calculated in the metric $\langle,\rangle.$

Furthermore, the Riemannian metric on $M$ induces the metric
\begin{equation*}
\langle X^\flat,Y^\flat\rangle=\langle X,Y\rangle,
\end{equation*}
where $X,Y\in\mathfrak{X}(M)$ and $X^\flat,Y^\flat\in\mathfrak{X}^*(M)$. It is easily verified that
\begin{equation*}
\langle\dv T,Z^\flat\rangle=\langle(\dv T)^\sharp,Z\rangle = (\dv T)(Z).
\end{equation*}

When there is no danger of confusion we must omit the symbol `` $^{\sharp}$ '' for the sake of simplicity.

For each $X\in\mathfrak{X}(M)$ we can consider the $(1,1)$--tensor $\nabla X(Y)=\nabla_YX$, for all $Y\in\mathfrak{X}(M)$. In this way, the divergence of $X$ is given by $\dv X=\mathrm{tr}(\nabla X).$ Let us define the $\eta$--divergence of $X$ as follows
\begin{equation*}
\dv_{\eta}X:=\dv X-\langle\nabla\eta,X\rangle.
\end{equation*}

From the usual properties of divergence of vector fields, one has
\begin{equation*}
\dv_{\eta}(fX)=f \dv_{\eta}X+\langle\nabla f,X\rangle\quad\mbox{and}\quad \dv(e^{-\eta}X)=e^{-\eta}\dv_{\eta}X
\end{equation*}
for all $f\in \mathcal{C}^{\infty}(M)$.

Suppose that $(M,\langle,\rangle)$ is an oriented Riemannian manifold, and let $\dM$ denote its Riemannian volume form. We will use the weighted measures $\dm=e^{-\eta}\dM$ and $\dn=e^{-\eta}\mathrm{d}\partial M$. If $\nu$ is the outward normal vector field on the boundary $\partial M$ and $X$ is a tangent vector field with compact support on $M$, then
\begin{equation*}
\int_{M}\dv_{\eta}X\dm=\int_{M}e^{-\eta}\dv_{\eta}X\mathrm{dM} =\int_{M}\dv(e^{-\eta}X)\dM =\int_{\partial M}\langle X,\nu\rangle\dn
\end{equation*}
which is the expression of the divergence theorem (or Stokes theorem) for the \emph{weighted manifold} $(M,\dm)$. Note that the drifting Laplacian $L$ is given by
\begin{equation*}
Lf=\dv_{\eta}\nabla f.
\end{equation*}
It is immediate that $L$ satisfies analogous properties to those of the Laplacian. For instance, for $f,\ell\in \mathcal{C}^{\infty}(M)$ we have
\begin{equation*}\label{F5-c3}
L(f\ell)=fL\ell+\ell Lf+2\langle\nabla f,\nabla\ell\rangle
\end{equation*}
and an extension of the well-known Bochner formula
\begin{equation*}
\frac{1}{2}L|\nabla f|^2= Ric_{\eta}(\nabla f,\nabla f)+|\nabla^2f|^2+\langle\nabla Lf,\nabla f\rangle,
\end{equation*}
where $Ric_{\eta}:=Ric+\nabla^{2}\eta$ is called the Bakry-Emery-Ricci tensor. This tensor has been especially studied in the theory of Ricci solitons and almost Ricci solitons, since a gradient Ricci soliton $(M,\langle,\rangle, \eta)$ is characterized by $Ric_{\eta}=\lambda\langle,\rangle$ for some constant $\lambda$, whereas in the case of almost gradient Ricci soliton, $\lambda$ is a variable function in $M$.

Now, let us make brief comment on specific points of the operator $\mathscr{L}$. It appears as the trace of the $(1,1)$--tensor on $M^n$, given by
\begin{equation*}
\tau_{\eta,f}:=\nabla T(\nabla f) - \frac{T(\nabla f,\nabla\eta)}{n}I.
\end{equation*}
Associated with $\tau_{\eta,f}$ one has
\begin{equation*}
\mathring{\tau_{\eta,f}}= \tau_{\eta,f}-\frac{\mathscr{L}f}{n}I  \quad\mbox{and}\quad|\tau_{\eta,f}|^2\geq \frac{1}{n}(\mathscr{L}f)^2.
\end{equation*}
Moreover, it has an $(\eta,T)$--divergence form, since $\mathscr{L}f:=\dv_{\eta}(T(\nabla f))$. Then, it is immediate from the properties of $\dv_{\eta}$ and the symmetry of $T$ that
\begin{equation*}\label{F5-c3}
\mathscr{L}(f\ell)=f\mathscr{L}\ell+\ell \mathscr{L}f+2T(\nabla f,\nabla\ell).
\end{equation*}

Alencar, Neto and Zhou~\cite{AGD} have recently proved a new Bochner-type formula and applied it to the operator which was introduced by Cheng and Yau~\cite{Cheng-Yau}, namely $$\Box f=\mathrm{tr}(\nabla^2f\circ T)=\langle\nabla^2f,T\rangle,$$ where $f\in \mathcal{C}^\infty(M)$ and $T$ is a symmetric $(1,1)$--tensor. Such formula is given by
\begin{equation}\label{BFpQ}
\frac{1}{2}\Box(|\nabla f|^2)=\langle\nabla(\Box f),\nabla f\rangle+R_T(\nabla f,\nabla f)+ \langle\nabla^2f,\nabla^2f\circ T\rangle-\langle\nabla^2f,\nabla_{\nabla f}T\rangle
\end{equation}
where $R_T(X,Y)=\mathrm{tr}(Z\mapsto T\circ R(X,Z)Y)$ and $R(X,Z)Y$  is the curvature tensor of $(M,\langle,\rangle)$.

A straightforward computation gives us
\begin{equation}\label{C7-c3}
\dv_{\eta}(T(\ell\nabla f))=\ell\langle\dv_{\eta}T,\nabla f\rangle+\ell\langle\nabla^{2}f,T\rangle+T(\nabla \ell,\nabla f),
\end{equation}
where $\dv_{\eta}T:=\dv T-\mathrm{d}\eta\circ T$ is the \emph{$\eta$--divergence} of a symmetric tensor $T$, and $\mathrm{d}\eta\circ T=\langle\nabla\eta,T(\cdot)\rangle=T(\nabla\eta,\cdot)$. So, we get the following equation
\begin{equation}\label{rel_L_dv_Box-c3}
\mathscr{L} f=\langle\dv_{\eta}T,\nabla f\rangle+\Box f.
\end{equation}

Equation \eqref{rel_L_dv_Box-c3} relates the operators $\mathscr{L}$, $\dv_\eta T$ and $\Box$. It also gives us a \emph{Bochner-type formula for the operator $\mathscr{L}$} as follows:
\begin{equation*}
\frac{1}{2}\mathscr{L}(|\nabla f|^2)=\langle\nabla(\mathscr{L}f),\nabla f\rangle+ R_{\eta,T}(\nabla f,\nabla f) +\langle\nabla^2f,\nabla^2f\circ T\rangle-\langle\nabla^2f,\nabla_{\nabla f}T\rangle
\end{equation*}
where $R_{\eta,T}:=R_T-\nabla(\dv_\eta T)^\sharp$. Indeed, from equation \eqref{rel_L_dv_Box-c3} we deduce that
\begin{align*}
\langle \nabla(\mathscr{L}f),\nabla f\rangle =&\langle \nabla_{\nabla f}\dv_{\eta}T,\nabla f \rangle + \langle\dv_{\eta}T,\nabla_{\nabla f}\nabla f \rangle + \langle \nabla(\Box f),\nabla f \rangle\\
=& \nabla (\dv_{\eta}T)^{\sharp}(\nabla f,\nabla f) +\frac{1}{2}\langle\dv_{\eta}T,\nabla(|\nabla f|^2)\rangle + \langle \nabla(\Box f),\nabla f \rangle\\
=& \nabla (\dv_{\eta}T)^{\sharp}(\nabla f,\nabla f) +\frac{1}{2}\mathscr{L}(|\nabla f|^2) - \frac{1}{2}\Box(|\nabla f|^2) + \langle \nabla(\Box f),\nabla f \rangle,
\end{align*}
that is,
\begin{equation*}
\frac{1}{2}\mathscr{L}(|\nabla f|^2) =\langle \nabla(\mathscr{L}f),\nabla f\rangle - \nabla (\dv_{\eta}T)^{\sharp}(\nabla f,\nabla f)+\frac{1}{2}\Box(|\nabla f|^2) - \langle \nabla(\Box f),\nabla f \rangle.
\end{equation*}
Therefore, our assertion follows from Bochner-type formula \eqref{BFpQ}.

Having proven the Bochner-type formula for $\mathscr{L}$, we can study hypersurfaces with constant mean curvature $k$ by means of the properties of the operator $\mathscr{L}$ as well as to obtain some generalizations in the spirit of Lichnerowicz's and Obata's theorems. Using standard techniques we can even hope to generalize the recent results obtained by Alencar, Neto and Zhou \cite{AGD}. Furthermore, we can also obtain some generalizations of the work of Uhlenbeck \cite{Uhlenbeck}, Berger~\cite{berger} and El Soufi and Ilias~\cite{ahmad}.

For orientable compact Riemannian manifolds, Cheng and Yau proved that the operator $\Box$ is self-adjoint if and only if $\dv T=0$. Indeed, from the divergence theorem and equation~\eqref{C7-c3} we get
\begin{equation*}
\int_M(f\Box \ell-\ell\Box f)\dM=\int_M \big(\ell\langle \dv T,\nabla f\rangle - f\langle\dv T,\nabla \ell\rangle\big)\dM.
\end{equation*}
Hence, the operator $\Box$ is self-adjoint if and only if $\dv T=0$. In this case, equation \eqref{rel_L_dv_Box-c3} reduces to
\begin{equation*}
\mathscr{L} f=-T(\nabla\eta,\nabla f)+\Box f.
\end{equation*}

For instance, denoting $R=\mathrm{tr}(Ric)$, it is well-known that $\dv Ric = \frac{\mathrm{d}R}{2}$ and $\dv(RI)=\mathrm{d}R$, so the Einstein tensor $G:=Ric-\frac{R}{2}\langle,\rangle$ has null divergence, and therefore $\Box f=\langle\nabla^2f,G\rangle$ is self-adjoint. Then,
\begin{equation*}
\mathscr{L} f=-G(\nabla\eta,\nabla f)+\langle\nabla^2f,G\rangle.
\end{equation*}
This particular case is likely to have applications in physics.

\section{The Dirichlet Problem}
In what follows $(M,\langle,\rangle)$ is a complete Riemannian manifold and the domain $\Omega\subset M$ is connected bounded with smooth boundary $\partial\Omega$ in $M$. Notice that the divergence theorem remains true in the form
\begin{equation*}
\int_{\Omega}\mathscr{L}f\dm=\int_{\partial\Omega}T(\nabla f,\nu)\dn,
\end{equation*}
where $\dn=e^{-\eta}\mathrm{d}\partial\Omega$ is the weight volume form on the boundary induced by the outward normal vector $\nu$ on $\partial\Omega$. Thus, the ``integration by parts'' formula is
\begin{equation*}
\int_{\Omega}\ell\mathscr{L}f\dm=-\int_{\Omega}T(\nabla\ell,\nabla f)\dm+\int_{\partial\Omega}\ell T(\nabla f,\nu)\dn.
\end{equation*}

Therefore, $\mathscr{L}$ is a self-adjoint operator in the space of all functions in $L^2(\Omega,\dm)$ that vanish on $\partial\Omega$. Thus the eigenvalue problem
\begin{equation}\label{PD-c3}
\left\{
\begin{array}{ccccc}
-\mathscr{L}u &=& \lambda u & \hbox{in} & \Omega,\\
u&=&0 & \hbox{on} & \partial\Omega
\end{array}
\right.
\end{equation}
has a real and discrete spectrum $0<\lambda_{1}\leq\lambda_{2}\ldots\leq\lambda_{k}\ldots\rightarrow \infty$,  where each $\lambda_i$
is repeated according to its multiplicity. In particular, for an eigenfunctions $u_i$ we have
\begin{eqnarray*}
\lambda_i=-\int_{\Omega}u_i\mathscr{L}u_i\dm=\int_{\Omega}T(\nabla
u_i,\nabla u_i)\dm.
\end{eqnarray*}

Operators in divergence form have been studied in different settings (see, e.g., \cite{AJuR,Li Ma,Setti,Zeng}). In particular, the drifting Laplacian $L$ is closely related to problems in spaces with warped products or Ricci solitons (see, e.g., \cite{CM04,RSWP,Ma06}). For instance, Zeng~\cite{Zeng} obtained some eigenvalue inequalities of $L$ on the Ricci solitons with certain conditions. More recently, by using of the strong maximum principle for $L$, it has been proved that an expanding or steady gradient Ricci soliton warped product $B^n\times_f F^m$, $m>1$, whose warping function $f$ reaches both maximum and minimum must be a Riemannian product \cite{RSWP}. The other example is when $\Omega\subset\Bbb{R}^n$, in this case, one considers the warped metric $g=g_0+e^{-\eta}d\theta^2$ on the product $\Omega\times\mathbb{S}^1$, where $g_0$ stands for the Riemannian metric in the domain $\Omega$, whereas $d\theta^2$ is the canonical metric of $\Bbb{S}^1$, then the scalar curvature of the metric $g$ is given by $f=\frac{1}{4}(2\Delta\eta -|\nabla\eta|^2)$. We mention that the modified scalar curvature of a metric $g$ and  a dilatation function $\eta$, as introduced by Perelman, is $\mathrm{R}^m=\mathrm{R}+4f$, where $\mathrm{R}$ is the scalar curvature of $g$.

Eigenvalues of differential operators find their applications in many areas. For example, in quantum mechanics  quantities like energy, momentum and position are represented by hermitian operators in a Hilbert space. The eigenvalues of the operator that corresponds to the energy are interpreted as the possible values of energy that the system can attain. In addition, the gap between them is simply the gap between the energy levels. A well-known mathematical question that motivates to study the properties of the spectrum of operators is about domains isospectral, for instance, if two domains are isospectral, is it necessarily true that they are isometric?

Weyl~\cite{Weyl} studied Problem~\eqref{PD-c3} for the Laplacian case, being the first to publish a proof of the asymptotic behavior of the eigenvalues of the Laplacian on domains in $\Bbb{R}^n$. However, Rayleigh had already studied such behavior few years earlier which had actually been conjectured by Sommerfeld and Lorentz. The Weyl's asymptotic formula is given by
\begin{equation}\label{introd-weyl-c3}
\lambda_k\sim\frac{4\pi^2}{(\omega_n \mathrm{vol}\,\Omega)^\frac{2}{n}}k^{\frac{2}{n}} \quad (k\to\infty)
\end{equation}
where $\omega_n$ is the volume of the unit ball in $\mathbb{R}^n$. The discovery of Weyl has become an important tool for comparison  of estimates related to eigenvalues, not only for the Laplacian, but also for the more general elliptic operators. Furthermore, the domains in $\Bbb{R}^n$ could be replaced by domains in Riemannian manifolds.

Good estimates are those that provide the best bounds taking into account the Weyl's asymptotic formula \eqref{introd-weyl-c3}. For example, if $\Omega$ is a tiling domain in $\Bbb{R}^n$ and $\lambda_k$ is a Laplace eigenvalue of the Dirichlet eigenvalue problem, then P\'olya~\cite{Polya} proved the inequality
\begin{equation}\label{Eq-polya}
\lambda_k\geq\frac{4\pi^2}{(\omega_n \mathrm{vol}\,\Omega)^\frac{2}{n}}k^{\frac{2}{n}}
\end{equation}
for $k\in\{1,2,\ldots\}$. He conjectured that this inequality is valid for a general domain in $\Bbb{R}^n$. In relation to this conjecture, Li and Yau \cite{Li-Yau} proved to be valid the inequality
\begin{equation}\label{partial-Yau}
\frac{1}{k}\sum_{i=1}^k\lambda_i\geq\frac{n}{n+2}\frac{4\pi^2}{(\omega_n \mathrm{vol}\,\Omega)^\frac{2}{n}}k^{\frac{2}{n}}.
\end{equation}

Cheng and Yang established an analog of the Li and Yau's inequality for Laplace eigenvalues of the Dirichlet eigenvalue problem on a domain in a complete Riemannian manifold, see \cite[Theorem~1.1]{Cheng-Yang-III}. From their results, they proposed a generalization of the P\'olya's conjecture, namely:

\begin{conjecture}\label{conjecture}
Let $\Omega$ be a domain in an $n$-dimensional complete Riemannian manifold $M$. Then, there exists a constant $c(M,\Omega)$, which only depends on $M$ and $\Omega$ such that Laplace eigenvalues $\lambda_i$'s of the Dirichlet eigenvalue problem, satisfy
\begin{eqnarray*}
\frac{1}{k}\sum_{i=1}^k\lambda_i + c(M,\Omega) &\geq& \frac{n}{n+2}\frac{4\pi^2}{(\omega_n \mathrm{vol}\,\Omega)^\frac{2}{n}}k^{\frac{2}{n}}, \quad\mbox{for}\quad k=1,2,\ldots\\
\lambda_k + c(M,\Omega) &\geq& \frac{4\pi^2}{(\omega_n \mathrm{vol}\,\Omega)^\frac{2}{n}}k^{\frac{2}{n}}, \quad\mbox{for}\quad k=1,2,\ldots
\end{eqnarray*}
\end{conjecture}

We now start with the proofs of the main results of the present paper. First we prove the proposition below which is motivated by the corresponding results for the Laplacian case proven in Cheng and Yang~\cite[Proposition~1]{Cheng-Yang-I} and for the drifting Laplacian case proven in Xia and Xu~\cite[Theorem~1.1]{Xia-Xu}. Here, we follow the steps of the proof of Theorem~1.1 in \cite{Xia-Xu} with appropriate adaptations for $\mathscr{L}$.

\begin{proposition}\label{Lem1T-c3}
Let $\lambda_i$ be the $i$-th eigenvalue of Problem \eqref{PD-c3} and $u_i$ its corresponding normalized real-valued eigenfunction. Then, for $h \in \mathcal{C}^3(\Omega)\cap\mathcal{C}^2(\partial \Omega)$, and $k$ integer, is valid
\begin{equation*}
\sum_{i=1}^k(\lambda_{k+1}-\lambda_i)^2\int_{\Omega} u_i^2T(\nabla h,\nabla h)\dm \leq\sum_{i=1}^k(\lambda_{k+1} -\lambda_i)\int_{\Omega}(u_i\mathscr{L}h+2T(\nabla h,\nabla u_i))^2\dm.
\end{equation*}
\end{proposition}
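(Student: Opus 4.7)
The plan is to follow the Rayleigh--Ritz / trial-function strategy used by Cheng--Yang for the Laplacian and by Xia--Xu for the drifting Laplacian, adapting it to $\mathscr{L}$. Assuming the $u_i$ are orthonormal in $L^2(\Omega,\dm)$, I will set $a_{ij}:=\int_{\Omega}hu_iu_j\dm$ and form
\[
\phi_i := hu_i - \sum_{j=1}^{k}a_{ij}u_j, \qquad i=1,\dots,k,
\]
which vanish on $\partial\Omega$ and are orthogonal to $u_1,\dots,u_k$. The min-max principle gives $\lambda_{k+1}\int_{\Omega}\phi_i^2\dm \leq -\int_{\Omega}\phi_i\mathscr{L}\phi_i\dm$. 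The product rule $\mathscr{L}(hu_i)=u_i\mathscr{L}h+h\mathscr{L}u_i+2T(\nabla h,\nabla u_i)$ established in Section~\ref{Sec-L}, together with $\mathscr{L}u_i=-\lambda_iu_i$, gives $\mathscr{L}(hu_i)=-\lambda_ihu_i+B_i$ for $B_i:=u_i\mathscr{L}h+2T(\nabla h,\nabla u_i)$. Using $\phi_i\perp u_j$, the variational inequality collapses to $(\lambda_{k+1}-\lambda_i)\int_{\Omega}\phi_i^2\dm \leq -\int_{\Omega}\phi_iB_i\dm$.

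Next I evaluate $-\int_{\Omega}\phi_iB_i\dm$ in closed form. An integration by parts applied to $\int_{\Omega}hu_i^2\mathscr{L}h\dm$, together with $\nabla(hu_i^2)=u_i^2\nabla h+2hu_i\nabla u_i$, produces the identity $\int_{\Omega}hu_iB_i\dm = -\int_{\Omega}u_i^2T(\nabla h,\nabla h)\dm =: -A_i$. Writing $b_{ij}:=\int_{\Omega}u_jB_i\dm$, this yields
\[
(\lambda_{k+1}-\lambda_i)\int_{\Omega}\phi_i^2\dm \;\leq\; A_i + \sum_{j=1}^{k}a_{ij}b_{ij}.
\]
The key algebraic identity $b_{ij}=(\lambda_i-\lambda_j)a_{ij}$ will then fall out of comparing $\int_{\Omega}u_j\mathscr{L}(hu_i)\dm$ with $\int_{\Omega}hu_i\mathscr{L}u_j\dm$ via the self-adjointness of $\mathscr{L}$; in particular $(b_{ij})$ is antisymmetric while $(a_{ij})$ is symmetric.

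To upgrade this bound to involve $\int_{\Omega}B_i^2\dm$ I introduce the companion trial function $\psi_i := B_i - \sum_{j=1}^{k}b_{ij}u_j$, which lies in $\{u_1,\dots,u_k\}^{\perp}$, satisfies the Bessel-type identity $\|\psi_i\|^2 = D_i-\sum_{j}b_{ij}^2$ with $D_i:=\int_{\Omega}B_i^2\dm$, and obeys $\int_{\Omega}\phi_i\psi_i\dm = \int_{\Omega}\phi_iB_i\dm$. Applying Cauchy--Schwarz to $\int_{\Omega}\phi_i\psi_i\dm$ and combining with the previous display yields
\[
(\lambda_{k+1}-\lambda_i)\Bigl(A_i+\sum_{j}a_{ij}b_{ij}\Bigr) \;\leq\; D_i - \sum_{j}b_{ij}^2.
\]

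Multiplying by $(\lambda_{k+1}-\lambda_i)$ and summing over $i=1,\dots,k$, the entire proof reduces to verifying the cancellation
\[
\sum_{i,j}(\lambda_{k+1}-\lambda_i)^2 a_{ij}b_{ij} \;+\; \sum_{i,j}(\lambda_{k+1}-\lambda_i)b_{ij}^2 \;=\; 0.
\]
Substituting $b_{ij}=(\lambda_i-\lambda_j)a_{ij}$ rewrites the left-hand side as $\sum_{i,j}a_{ij}^2(\lambda_{k+1}-\lambda_i)(\lambda_{k+1}-\lambda_j)(\lambda_i-\lambda_j)$, whose summand is antisymmetric under $i\leftrightarrow j$ while $a_{ij}^2$ is symmetric, so the sum vanishes identically. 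The step I expect to be the main obstacle is precisely this cancellation: organizing the symmetric/antisymmetric pieces under the weights $(\lambda_{k+1}-\lambda_i)^{\alpha}$ requires care, and the decisive input is the commutator-type identity $b_{ij}=(\lambda_i-\lambda_j)a_{ij}$, which makes the exact cancellation possible and, with it, the stated inequality.
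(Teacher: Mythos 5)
Your proposal is correct and follows essentially the same route as the paper's own proof: the same trial functions $\phi_i=hu_i-\sum_j a_{ij}u_j$, the same integration-by-parts identity giving $-\int_\Omega hu_iB_i\,\dm=\int_\Omega u_i^2T(\nabla h,\nabla h)\,\dm$, the same commutator relation $b_{ij}=(\lambda_i-\lambda_j)a_{ij}$ with antisymmetry, and the same final cancellation of $\sum_{i,j}a_{ij}^2(\lambda_{k+1}-\lambda_i)(\lambda_{k+1}-\lambda_j)(\lambda_i-\lambda_j)$ by symmetry. Your introduction of the companion function $\psi_i=B_i-\sum_j b_{ij}u_j$ with the Bessel identity is only a cosmetic repackaging of the paper's direct expansion of $|u_i\mathscr{L}h+2T(\nabla h,\nabla u_i)-\sum_j b_{ij}u_j|^2$.
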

\begin{proof}
By the inequality of Rayleigh-Ritz, we have
\begin{equation}\label{IneqRR}
\lambda_{k+1}\leq -\frac{\int_{\Omega}\psi \mathscr{L}\psi\dm}{\int_{\Omega}\psi^2\dm},
\end{equation}
for any no null function $\psi:\Omega\rightarrow\mathbb{R}$ satisfying
\begin{equation*}
\psi|_{\partial \Omega}=0  \quad\mbox{and}\quad  \int_{\Omega}\psi u_i \dm=0, \quad\forall i=1,\ldots,k.
\end{equation*}
For $h \in\mathcal{C}^3(\Omega)\cap\mathcal{C}^2(\partial\Omega)$ and for each $i$, we set
\begin{equation}\label{phi-i}
\phi_i=hu_i - \sum_{j=1}^{k}c_{ij}u_j
\end{equation}
where $c_{ij}= \int_{\Omega} hu_iu_j\dm$, so that, $\phi_i|_{\partial\Omega}=0$ and
\begin{equation*}
0=\int_{\Omega} \phi_i u_l \dm=\int_{\Omega}hu_iu_l \dm-\sum_{j=1}^{k}c_{ij}\int_{\Omega} u_ju_l\dm=\int_{\Omega}hu_iu_l \dm -\sum_{j=1}^{k}c_{ij}\delta_{jl}.
\end{equation*}
Then, we can make $\psi=\phi_i$ in \eqref{IneqRR}, i.e.,
\begin{equation}\label{RRfi-c3}
\lambda_{k+1}\leq - \frac{\int_{\Omega}\phi_i \mathscr{L} \phi_i \dm}{\int_{\Omega}\phi_i^2 \dm}.
\end{equation}
Since
\begin{equation*}
\mathscr{L}\phi_i = \mathscr{L}(hu_i)-\sum_{j=1}^{k}c_{ij}\mathscr{L}u_j=-\lambda_i hu_i +u_i\mathscr{L}h+2T(\nabla h,\nabla u_i) +\sum_{j=1}^{k}c_{ij}\lambda_ju_j,
\end{equation*}
from \eqref{phi-i} we have
\begin{align}\label{-fiLfi-c3}
\nonumber\phi_i\mathscr{L}\phi_i&=\phi_i(u_i\mathscr{L}h+2T(\nabla h,\nabla u_i))-\phi_i\lambda_i hu_i+\sum_{j=1}^{k}c_{ij}\lambda_ju_j\phi_i\\
&=\phi_i(u_i\mathscr{L}h+2T(\nabla h,\nabla u_i))-\lambda_i\phi_i^2-\sum_{j=1}^{k}c_{ij}\lambda_iu_j\phi_i +\sum_{j=1}^{k}c_{ij}\lambda_ju_j\phi_i.
\end{align}
Substituting \eqref{-fiLfi-c3} into \eqref{RRfi-c3}, we deduce that
\begin{equation*}
\lambda_{k+1}\leq-\frac{\int_{\Omega}\big(\phi_i(u_i\mathscr{L}h+2T(\nabla h,\nabla u_i))-\lambda_i\phi_i^2\big)\dm}{\int_{\Omega}\phi_i^2\dm}.
\end{equation*}
Hence
\begin{equation}\label{1.0-c3}
(\lambda_{k+1}-\lambda_i)|\phi_i|^2\leq-\int_{\Omega}\phi_i\big(u_i\mathscr{L}h+2T(\nabla h,\nabla u_i)\big)\dm,
\end{equation}
where $|\phi_i|^2=|\phi_i|_{L^2(\Omega,\dm)}^2$. We now estimate
\begin{equation*}
P_i=- \int_{\Omega}\phi_i\big(u_i\mathscr{L}h+2T(\nabla h,\nabla u_i)\big)\dm.
\end{equation*}
For this, we set
\begin{equation*}
b_{ij}=-\int(u_j\mathscr{L}h+2T(\nabla h,\nabla u_j))u_i\dm.
\end{equation*}
We use that $\int_{\Omega}\phi_iu_j\dm=0$ again to get
\begin{eqnarray*}
P_i&=&-\int_{\Omega}\phi_i\Big(u_i\mathscr{L}h+2T(\nabla h,\nabla u_i)-\sum_{j=1}^{k} b_{ij}u_j\Big)\dm\\
&\leq&|\phi_i||u_i\mathscr{L}h+2T(\nabla h,\nabla u_i)-\sum_{j=1}^{k} b_{ij}u_j|.
\end{eqnarray*}
Therefore
\begin{equation}\label{Pi-c3}
(\lambda_{k+1}-\lambda_i)^2P_i^2\leq(\lambda_{k+1}-\lambda_i)^2 |\phi_i|^2|u_i\mathscr{L}h+2T(\nabla h,\nabla u_i)- \sum_{j=1}^{k} b_{ij}u_j|^2.
\end{equation}
From \eqref{1.0-c3}, $(\lambda_{k+1}-\lambda_i) |\phi_i|^2\leq P_i$, then \eqref{Pi-c3} implies
\begin{equation*}
(\lambda_{k+1}-\lambda_i)^2P_i \leq (\lambda_{k+1}-\lambda_i)|u_i\mathscr{L}h+2T(\nabla h,\nabla u_i)- \sum_{j=1}^{k} b_{ij}u_j|^2,
\end{equation*}
whence
\begin{equation}\label{1.1 dif-c3}
\sum_{i=1}^{k}(\lambda_{k+1}-\lambda_i)^2P_i \leq\sum_{i=1}^{k}(\lambda_{k+1}-\lambda_i)|u_i\mathscr{L}h+2T(\nabla h,\nabla u_i) - \sum_{j=1}^{k} b_{ij}u_j|^2.
\end{equation}
Moreover,
\begin{eqnarray*}
\lambda_i c_{ij} & = &- \int_{\Omega}hu_j\mathscr{L}u_i \dm = -\int_{\Omega}u_i\mathscr{L}(hu_j) \dm\\
&=&-\int_{\Omega}u_i\big(-\lambda_jhu_j+u_j\mathscr{L}h+2T(\nabla h,\nabla u_j)\big)\dm = \lambda_jc_{ij}+b_{ij},
\end{eqnarray*}
thus $b_{ij}=(\lambda_i-\lambda_j)c_{ij}$ and $b_{ij}=-b_{ji}$. Besides,
\begin{eqnarray}\label{1.2-c3}
\nonumber &&|u_i\mathscr{L}h+2T(\nabla h,\nabla u_i)-\sum_{j=1}^{k} b_{ij}u_j|^2\\
\nonumber&=&|u_i\mathscr{L}h+2T(\nabla h,\nabla u_i)|^2+ \sum_{j=1}^{k} b_{ij}^2 -2\sum_{j=1}^{k} b_{ij}\int_{\Omega}\big(u_i\mathscr{L}h+2T(\nabla h,\nabla u_i)\big)u_j\dm\\
\nonumber&=&|u_i\mathscr{L}h+2T(\nabla h,\nabla u_i)|^2-\sum_{j=1}^{k} b_{ij}^2\\
&=&|(u_i\mathscr{L}h+2T(\nabla h,\nabla u_i)|^2 - \sum_{j=1}^{k}(\lambda_i-\lambda_j)^2c_{ij}^2
\end{eqnarray}
and
\begin{eqnarray}\label{1.3-c3}
\nonumber P_i&=&-\int_{\Omega}(hu_i - \sum_{j=1}^{k} c_{ij}u_j)(u_i\mathscr{L}h+2T(\nabla h,\nabla u_i))\dm\\
\nonumber&=&-\int_{\Omega}\big(hu_i^2 \mathscr{L}h +2hu_iT(\nabla h,\nabla u_i)\big)\dm +\sum_{j=1}^{k} c_{ij}b_{ij}\\
\nonumber&=&\int_{\Omega}T(\nabla (hu_i^2),\nabla h)\dm-2\int_{\Omega}hu_iT(\nabla h,\nabla u_i)\dm +\sum_{j=1}^{k}(\lambda_i-\lambda_j)c_{ij}^2\\
&=&\int_{\Omega} u_i^2T(\nabla h,\nabla h)\dm +\sum_{j=1}^{k}(\lambda_i-\lambda_j)c_{ij}^2.
\end{eqnarray}
Substituting \eqref{1.2-c3} and \eqref{1.3-c3} into \eqref{1.1 dif-c3} we obtain
\begin{align}\label{eqFAuf1}
&\sum_{i=1}^{k}(\lambda_{k+1}-\lambda_i)^2\int_{\Omega} u_i^2T(\nabla h,\nabla h)\dm +\sum_{i,j=1}^{k}(\lambda_{k+1} -\lambda_i)^2(\lambda_i-\lambda_j)c_{ij}^2\\
\nonumber\leq&\sum_{i=1}^{k}(\lambda_{k+1}-\lambda_i)\int_{\Omega}(u_i\mathscr{L}h+2T(\nabla h,\nabla u_i))^2\dm-\sum_{i,j=1}^{k}(\lambda_{k+1}
-\lambda_i)(\lambda_i-\lambda_j)^2c_{ij}^2.
\end{align}
We observe that
\begin{align}\label{eqFAuf2}
\nonumber&\sum_{i,j=1}^{k}(\lambda_{k+1}-\lambda_i)^2(\lambda_i-\lambda_j)c_{ij}^2=\sum_{i,j=1}^{k}(\lambda_{k+1}-\lambda_i)
(\lambda_{k+1}-\lambda_j+\lambda_j-\lambda_i)(\lambda_i-\lambda_j)c_{ij}^2\\
\nonumber&=\sum_{i,j=1}^{k}(\lambda_{k+1}-\lambda_i)(\lambda_{k+1}-\lambda_j)(\lambda_i-\lambda_j)c_{ij}^2+\sum_{i,j=1}^{k}
(\lambda_{k+1}-\lambda_i)(\lambda_j-\lambda_i)(\lambda_i-\lambda_j)c_{ij}^2\\
&=-\sum_{i,j=1}^{k}(\lambda_{k+1}-\lambda_i)(\lambda_i-\lambda_j)^2c_{ij}^2,
\end{align}
where we have used the fact that $\sum_{i,j=1}^{k}(\lambda_{k+1}-\lambda_i)(\lambda_{k+1}-\lambda_j)(\lambda_i-\lambda_j)c_{ij}^2=0$. Finally, the required inequality in the proposition follows immediately from \eqref{eqFAuf1} and \eqref{eqFAuf2}.
\end{proof}

The following result is an extension for the operator $\mathscr{L}$ of the inequality obtained for the Laplacian case in Chen-Cheng \cite[Theorem~1.1]{chen-cheng}. We follow the steps of the proof of Theorem~1.2 in Xia and Xu~\cite{Xia-Xu} with appropriate adaptations for $\mathscr{L}$. In Section~\ref{sec-eta-laplaciano}, we can notice that this is a good extension for $\mathscr{L}$ of Theorem~1.2 in~\cite{Xia-Xu}.

\begin{proposition}\label{lemmaCYT-c3}
Let $\Omega$ be a domain of an $n$-dimensional complete Riemannian manifold $M$ isometrically immersed in $\mathbb{R}^m$, $\lambda_i$ be the $i$-th eigenvalue of Problem \eqref{PD-c3} and $u_i$ its corresponding normalized real-valued eigenfunction. Then
\begin{eqnarray*}
\mathrm{tr}(T)\sum_{i=1}^k(\Lambda_i^1)^2&\leq&\sum_{i=1}^k\Lambda_i^1\Big[\int_{\Omega}u_i^2\Big(\|\mathrm{tr}(\alpha\circ T)\|^2+|\mathrm{tr}(\nabla T)-T(\nabla\eta)|^2\Big)\dm\\
&&+4\int_{\Omega}u_i\Big(\langle\mathrm{tr}(\nabla T),T(\nabla u_i)\rangle-\langle T(\nabla\eta),T(\nabla u_i)\rangle\Big)\dm+4\lambda_i\Big],
\end{eqnarray*}
where $\Lambda_i^1=\lambda_{k+1}-\lambda_i$, $\alpha$ is the second fundamental form of $M$ and $\alpha\circ T=\alpha(T(\cdot),\cdot)$.
\end{proposition}
\begin{proof}
Let $x=(x_1,\ldots,x_m)$ be the position vector of the immersion of $M$ in $\mathbb{R}^m$. Consider $h=x_{\ell}$ in Proposition~\ref{Lem1T-c3} and take the sum in $\ell$ to get
\begin{eqnarray*}
&&\sum_{i=1}^k(\Lambda_i^1)^2\int_{\Omega}u_i^2\sum_{\ell=1}^mT(\nabla x_{\ell},\nabla x_{\ell})\dm\\
&\leq&\sum_{i=1}^k\Lambda_i^1\int_{\Omega}\sum_{\ell=1}^m(u_i\mathscr{L}x_{\ell}+2T(\nabla x_{\ell},\nabla u_i))^2\dm\\
&=&\sum_{i=1}^k\Lambda_i^1\int_{\Omega}\sum_{\ell=1}^m\Big(u_i\mathrm{div}_{\eta}(T(\nabla x_{\ell}))+2T(\nabla x_{\ell},\nabla u_i)\Big)^2\dm.
\end{eqnarray*}
So,
\begin{eqnarray}\label{01T-c3}
\nonumber &&\sum_{i=1}^k(\Lambda_i^1)^2 \int_{\Omega} u_i^2\sum_{\ell=1}^mT(\nabla x_{\ell},\nabla x_{\ell})\dm\\
\nonumber&\leq&\sum_{i=1}^k \nonumber\Lambda_i^1\int_{\Omega}\sum_{\ell=1}^m\Big(u_i^2(\mathrm{div}_{\eta}(T(\nabla x_{\ell}))^2+4u_i\mathrm{div}_{\eta}(T(\nabla x_{\ell}))T(\nabla x_{\ell},\nabla u_i)\\
&&+4T(\nabla x_{\ell},\nabla u_i)^2\Big)\dm.
\end{eqnarray}
Denoting  the canonical connection of $\mathbb{R}^m$ by $\nabla^0$ and taking $\{e_1,\ldots, e_m\}$ a local orthonormal geodesic frame in $p\in M$ adapted to $M$, we can write
\begin{eqnarray*}
\nabla^0 x_{\ell}&=&\sum_{i=1}^n e_i(x_\ell)e_i+\sum_{i=n+1}^m e_i(x_\ell)e_i,\\
e_{\ell}&=&\nabla x_{\ell}+(\nabla x_{\ell})^\perp.
\end{eqnarray*}
Thus
\begin{eqnarray}\label{03T-c3}
\nonumber\sum_{\ell=1}^m T(\nabla x_{\ell},\nabla u_i)^2&=&\sum_{\ell=1}^m \langle\nabla x_{\ell},T(\nabla u_i)\rangle^2=\sum_{\ell=1}^m\langle e_{\ell}-(\nabla x_{\ell})^\perp,T(\nabla u_i)\rangle^2\\
&=&\sum_{\ell=1}^n\langle e_{\ell},T(\nabla u_i)\rangle^2=|T(\nabla u_i)|^2,
\end{eqnarray}
and
\begin{eqnarray}\label{02T-c3}
\nonumber\sum_{\ell=1}^m T(\nabla x_{\ell},\nabla x_{\ell})&=&\sum_{\ell=1}^m\langle\nabla x_{\ell},T(\nabla x_{\ell})\rangle=\sum_{\ell=1}^m\langle e_{\ell}-(\nabla x_{\ell})^\perp,T(\nabla x_{\ell})\rangle\\
\nonumber&=&\sum_{\ell=1}^n\langle e_{\ell},T(\nabla x_{\ell})\rangle=\sum_{\ell=1}^n\langle T(e_{\ell}),\nabla x_{\ell}\rangle\\
&=&\sum_{l=1}^n \langle e_{\ell},T(e_{\ell})\rangle=\mathrm{tr}(T).
\end{eqnarray}
Moreover,
\begin{align}\label{06T-c3}
\nonumber\mathrm{div}_{\eta}(T(\nabla x)):=&\big(\mathrm{div}_{\eta}(T(\nabla x_1)),\ldots,\mathrm{div}_{\eta}(T(\nabla x_m))\big)\\
\nonumber=&(\mathrm{div}(T(\nabla x_1))-\langle \nabla\eta, T(\nabla x_1)\rangle,\ldots,\mathrm{div}(T(\nabla x_m))-\langle \nabla\eta,T(\nabla x_m)\rangle)\\
=&\mathrm{div}(T(\nabla x))-\mathrm{d}\eta\circ T(\nabla x).
\end{align}
Next, we compute
\begin{eqnarray*}
\mathrm{div}(T(\nabla x))&:=&\Big(\mathrm{div}(T(\nabla x_1)),\ldots,\mathrm{div}(T(\nabla x_m))\Big)\\
&=&\Big(\sum_{i=1}^n e_i\langle T(\nabla x_1),e_i\rangle,\ldots,\sum_{i=1}^n e_i\langle T(\nabla x_m),e_i\rangle\Big)\\
&=&\sum_{i=1}^n\Big(e_i\langle T(\sum_{j=1}^n e_j(x_1)e_j),e_i\rangle,\ldots,e_i\langle T(\sum_{j=1}^n e_j(x_m)e_j),e_i\rangle\Big)\\
&=&\sum_{i,j=1}^n\Big(e_i\big(e_j(x_1)\langle T(e_j),e_i\rangle\big),\ldots,e_i\big(e_j(x_m)\langle T(e_j),e_i\rangle\big)\Big).
\end{eqnarray*}
Then,
\begin{eqnarray*}
\mathrm{div}(T(\nabla x))&=&\sum_{i,j=1}^n\Big(e_ie_j(x_1)\langle T(e_j),e_i\rangle,\ldots,e_ie_j(x_m)\langle T(e_j),e_i\rangle\Big)\\
&&+\sum_{i,j=1}^n\Big(e_j(x_1)\langle\nabla_{e_i}T(e_j),e_i\rangle,\ldots,e_j(x_m)\langle\nabla_{e_i}T(e_j),e_i\rangle\Big)\\
&=&\sum_{i,j=1}^n\langle T(e_j),e_i\rangle\Big(e_ie_j(x_1),\ldots,e_ie_j(x_m)\Big)\\
&&+\sum_{i,j=1}^n\langle\nabla_{e_i}T(e_j),e_i\rangle\Big(e_j(x_1),\ldots,e_j(x_m)\Big).
\end{eqnarray*}
Hence
\begin{eqnarray}\label{divT-c3}
\nonumber\mathrm{div}(T(\nabla x))&=&\sum_{i,j=1}^n\langle T(e_j),e_i\rangle(\nabla^0_{e_i}e_j)(x)+\sum_{i,j=1}^n\langle\nabla_{e_i}T(e_j),e_i\rangle e_j(x)\\
\nonumber&=&\sum_{i,j=1}^n\langle T(e_j),e_i\rangle\nabla^0_{e_i}e_j+\sum_{i,j=1}^n\langle\nabla_{e_i}T(e_j),e_i\rangle e_j\\
\nonumber&=&\sum_{i,j=1}^n\langle T(e_j),e_i\rangle(\nabla_{e_i}e_j+\alpha(e_i,e_j))+\sum_{i,j=1}^n\langle\nabla_{e_i}T(e_j),e_i\rangle e_j\\
\nonumber&=&\sum_{i,j=1}^n\langle T(e_j),e_i\rangle\alpha(e_i,e_j)+\sum_{i,j=1}^n\langle\nabla_{e_i}T(e_j),e_i\rangle e_j\\
&=&\sum_{j=1}^n\alpha(T(e_j),e_j)+\sum_{i,j=1}^n\langle\nabla_{e_i}T(e_j),e_i\rangle e_j.
\end{eqnarray}
Since T is symmetric we have $\langle T(e_j),e_i\rangle=\langle e_j,T(e_i)\rangle$, which implies
\begin{equation*}
\langle\nabla_{e_i}T(e_j),e_i\rangle=\langle e_j,\nabla_{e_i}T(e_i)\rangle.
\end{equation*}
Substituting this previous equation into \eqref{divT-c3}, we have
\begin{eqnarray}\label{2-Tr}
\nonumber\mathrm{div}(T(\nabla x))&=&\sum_{i=1}^n\alpha(T(e_i),e_i)+\sum_{i=1}^n\nabla_{e_i}T(e_i)\\
\nonumber&=&\sum_{i=1}^n\alpha(T(e_i),e_i)+\sum_{i=1}^n(\nabla_{e_i}T)(e_i)\\
\nonumber&=&\sum_{i=1}^n\alpha(T(e_i),e_i)+\sum_{i=1}^n(\nabla T)(e_i,e_i)\\
&=&\mathrm{tr}(\alpha(T(\cdot),\cdot))+\mathrm{tr}(\nabla T),
\end{eqnarray}
where
\begin{equation*}
\mathrm{tr}(\alpha(T(\cdot),\cdot)):=\sum_{i=1}^n\alpha(T(e_i),e_i)\in \mathfrak{X}(M)^\perp \,\,\,\mbox{and}\,\,\, \mathrm{tr}(\nabla T):=\sum_{i=1}^n(\nabla T)(e_i,e_i)\in \mathfrak{X}(M).
\end{equation*}
Besides,
\begin{eqnarray}\label{d_etaT-c3}
\nonumber \mathrm{d}\eta\circ T(\nabla x)&:=&\big(\langle\nabla\eta,T(\nabla x_1)\rangle,\ldots,\langle\nabla\eta,T(\nabla x_m)\rangle\big)\\
\nonumber&=&\big(\langle\nabla\eta,T(\sum_{i=1}^n e_i(x_1)e_i)\rangle,\ldots,\langle\nabla\eta,T(\sum_{i=1}^n e_i(x_m)e_i)\rangle\big)\\
\nonumber&=&\sum_{i=1}^n\langle\nabla\eta,T(e_i)\rangle( e_i(x_1),\ldots,e_i(x_m))\\
&=&\sum_{i=1}^n\langle\nabla\eta,T(e_i)\rangle e_i(x)=\sum_{i=1}^n\langle T(\nabla\eta),e_i\rangle e_i=T(\nabla\eta).
\end{eqnarray}
Substituting \eqref{2-Tr} and \eqref{d_etaT-c3} into \eqref{06T-c3}, we obtain
\begin{equation*}
\mathrm{div}_{\eta}(T(\nabla x))=\mathrm{tr}(\alpha(T(\cdot),\cdot))+\mathrm{tr}(\nabla T)-T(\nabla\eta).
\end{equation*}
We now compute
\begin{eqnarray}\label{09T-c3}
\nonumber\sum_{\ell=1}^m(\mathrm{div}_{\eta}(T(\nabla x_{\ell})))^2&=&\|\mathrm{div}_{\eta}(T(\nabla x))\|^2\\
&=&\|\mathrm{tr}(\alpha(T(\cdot),\cdot))\|^2+|\mathrm{tr}(\nabla T)-T(\nabla\eta)|^2
\end{eqnarray}
and
\begin{align}\label{08T-c3}
\nonumber\sum_{\ell=1}^m \mathrm{div}_{\eta}(T(\nabla x_{\ell}))T(\nabla x_{\ell},\nabla u_i)&=\sum_{\ell=1}^m \mathrm{div}_{\eta}(T(\nabla x_{\ell}))T(\nabla u_i)(x_{\ell})\\
\nonumber&=\langle\mathrm{div}_{\eta}(T(\nabla x)),T(\nabla u_i)\rangle\\
\nonumber&=\langle\mathrm{tr}(\nabla T)-T(\nabla\eta),T(\nabla u_i)\rangle\\
&=\langle\mathrm{tr}(\nabla T),T(\nabla u_i)\rangle-\langle T(\nabla\eta),T(\nabla u_i)\rangle.
\end{align}
Substituting \eqref{03T-c3}, \eqref{02T-c3}, \eqref{09T-c3} and \eqref{08T-c3} into \eqref{01T-c3} we complete the proof of the proposition.
\end{proof}

\subsection{Proof of Theorem~\ref{propCYT-c3}}
\begin{proof}
We make use of Proposition~\ref{lemmaCYT-c3}. For this, we begin by computing
\begin{eqnarray*}
\nonumber &&\|\mathrm{tr}(\alpha(T(\cdot),\cdot))\|^2+|\mathrm{tr}(\nabla T)-T(\nabla\eta)|^2\\
&=&\|\sum_{i=1}^n\alpha(T(e_i),e_i)\|^2+|\mathrm{tr}(\nabla T)|^2 -2\langle\mathrm{tr}(\nabla T),T(\nabla\eta)\rangle+|T(\nabla\eta)|^2.
\end{eqnarray*}
The last term satisfies the inequality
\begin{eqnarray*}
|T(\nabla\eta)|^2&=&\sum_{i=1}^n\langle T(\nabla\eta),e_i\rangle^2=\sum_{i=1}^n\langle\nabla\eta,T(e_i)\rangle^2\\
&\leq&|\nabla\eta|^2|\sum_{i=1}^n|T(e_i)|^2=|\nabla\eta|^2|T|^2.
\end{eqnarray*}
By setting $T_*=\sup_{\bar{\Omega}}|T|$ and $\eta_0=\sup_{\bar{\Omega}}|\nabla\eta|$ we get
\begin{equation}\label{Tnabla-eta-c3}
|T(\nabla\eta)|^2\leq T_*^2\eta_0^2.
\end{equation}
Furthermore, we set $T_0=\sup_{\bar{\Omega}}|\mathrm{tr}(\nabla T)|$ so that
\begin{eqnarray}\label{Tintegral01-c3}
\nonumber-2\int_{\Omega}u_i^2\langle\mathrm{tr}(\nabla T),T(\nabla\eta)\rangle\dm&\leq&2\int_{\Omega}u_i^2|\mathrm{tr}(\nabla T)||T(\nabla\eta)|\dm\\
\nonumber&\leq&2T_0T_*\eta_0\int_{\Omega}u_i^2\dm\\
&\leq&2T_0T_*\eta_0.
\end{eqnarray}
We also have
\begin{eqnarray*}
\nonumber\|\sum_{i=1}^n\alpha(T(e_i),e_i)\|^2&=&\|\sum_{i=1}^n\sum_{k=n+1}^m\langle\alpha(T(e_i),e_i),e_k\rangle e_k\|^2\\
\nonumber&=&\|\sum_{i=1}^n\sum_{k=n+1}^m\langle A_{e_k}e_i,T(e_i)\rangle e_k\|^2\\
\nonumber&=&\|\sum_{k=n+1}^m\big(\sum_{i=1}^n\langle A_{e_k}e_i,T(e_i)\rangle\big) e_k\|^2\\
&=&\|\sum_{k=n+1}^m\langle A_{e_k},T\rangle e_k\|^2.
\end{eqnarray*}
Estimating the right hand side, we obtain
\begin{eqnarray}\label{Tintegral04-c3}
\nonumber\|\sum_{i=1}^n\alpha(T(e_i),e_i)\|^2&\leq&\sum_{k=n+1}^m|\langle A_{e_k},T\rangle|^2\sum_{k=n+1}^m|e_k|^2\\
\nonumber&\leq&(m-n)\sum_{k=n+1}^m|A_{e_k}|^2|T|^2\\
&\leq&(m-n)^2 A_0^2T_*^2,
\end{eqnarray}
where $A_0=\max\{\sup_{\bar{\Omega}}|A_{e_k}|, k=n+1,\ldots,m\}$, and each $A_{e_k}$ is the Weingarten operator of the immersion with respect to $e_k$.

To complete the proof we need to estimate
\begin{eqnarray}\label{Tintegral02-c3}
\nonumber4\int_{\Omega}u_i\langle\mathrm{tr}(\nabla T),T(\nabla u_i)\rangle\dm&\leq&4\int_{\Omega}|u_i||\mathrm{tr}(\nabla T)||T(\nabla u_i)|\dm\\
\nonumber&\leq&4\left(\int_{\Omega}u_i^2\dm\right)^{\frac{1}{2}}\left(\int_{\Omega}|\mathrm{tr}(\nabla T)|^2|T(\nabla u_i)|^2\dm\right)^{\frac{1}{2}}\\
&\leq&4T_0||T(\nabla u_i)||_{L^2(\Omega,\dm)}
\end{eqnarray}
and
\begin{eqnarray}\label{Tintegral03-c3}
\nonumber-4\int_{\Omega}u_i\langle T(\nabla\eta),T(\nabla u_i)\rangle\dm&\leq& 4\left(\int_{\Omega}|(T(\nabla\eta)|^2|T(\nabla u_i)|^2\dm\right)^{\frac{1}{2}}\\
&\leq&4T_*\eta_0||T(\nabla u_i)||_{L^2(\Omega,\dm)}.
\end{eqnarray}
Substituting \eqref{Tnabla-eta-c3}, \eqref{Tintegral01-c3}, \eqref{Tintegral04-c3}, \eqref{Tintegral02-c3} and \eqref{Tintegral03-c3} into Proposition~\ref{lemmaCYT-c3} we complete the proof of the theorem.
\end{proof}

\section{Drifting Laplacian Case}\label{sec-eta-laplaciano}

In this section, the key to our proofs rely on a slight modification from Theorem~1.2 in \cite{Xia-Xu} namely:

\begin{theorem}[Xia and Xu~\cite{Xia-Xu}]\label{propCYM-c3}
Let $\Omega$ be a domain in an $n$-dimensional complete Riemannian manifold $M$ isometrically immersed in $\mathbb{R}^m$ with mean curvature $H$ and $\lambda_i$ be the $i$-th eigenvalue of the drifting Laplacian. Then
\begin{equation}\label{formCYM-c3}
\sum_{i=1}^k(\upsilon_{k+1}-\upsilon_i)^2 \leq \frac{4}{n} \sum_{i=1}^k (\upsilon_{k+1}-\upsilon_i)\upsilon_i,
\end{equation}
where $\upsilon_i:=\lambda_i +\frac{n^2H_0^2+\eta_0^2+2\bar{\eta_0}}{4}$, $\eta_0=\sup_{\bar{\Omega}}|\nabla \eta|$, $\bar{\eta_0}=\sup_{\bar{\Omega}}|L\eta|$ and $H_0=\sup_{\bar{\Omega}}\|H\|.$
\end{theorem}
\begin{proof}
Taking $T=I$ in Proposition~\ref{lemmaCYT-c3} we obtain
\begin{eqnarray*}
n\sum_{i=1}^k(\lambda_{k+1}-\lambda_i)^2&\leq&\sum_{i=1}^k(\lambda_{k+1}-\lambda_i)\Big[\int_{\Omega}u_i^2\Big(n^2\|H\|^2+|\nabla\eta|^2\Big)\dm\\
&&-4\int_{\Omega}u_i\langle \nabla\eta,\nabla u_i\rangle\dm+4\lambda_i\Big].
\end{eqnarray*}
By setting $\eta_0=\sup_{\bar{\Omega}}|\nabla \eta|$ and $H_0=\sup_{\bar{\Omega}}\|H\|$ we get
\begin{equation}\label{aux-alambda_i+b-c3}
n\sum_{i=1}^k(\lambda_{k+1}-\lambda_i)^2\leq\sum_{i=1}^k(\lambda_{k+1}-\lambda_i)\Big(n^2H_0^2+\eta_0^2+4\lambda_i-4\int_{\Omega}u_i\langle \nabla\eta,\nabla u_i\rangle\dm\Big).
\end{equation}
Since $u_i=0$ on $\partial\Omega$, we can use integration by parts to obtain
\begin{eqnarray}\label{alambda_i+b-c3}
\nonumber-4\int_{\Omega}u_i\g{\nabla \eta}{\nabla u_i}\dm &=&-2\int_{\Omega}\g{\nabla \eta}{\nabla u_i^2}\dm\\
\nonumber&=&2\int_{\Omega}u_i^2L\eta\dm\\
&\leq&2\bar{\eta_0},
\end{eqnarray}
where $\bar{\eta_0}=\sup_{\bar{\Omega}}|L\eta|$. Inequality~\eqref{formCYM-c3} follows by substituting \eqref{alambda_i+b-c3} into \eqref{aux-alambda_i+b-c3}.
\end{proof}

\begin{remark}
The proof of the previous theorem is almost the same as Theorem~1.2 in \cite{Xia-Xu}. We need a different estimate in the final part of the proof, see inequality~\eqref{alambda_i+b-c3}. We point out that inequality~\eqref{formCYM-c3} is the first step to obtain our Theorems \ref{thmA3-c3} and \ref{thmA2-c3} as an application of other known techniques. Moreover, \eqref{formCYM-c3} is an extension for $L$ of the first Yang's inequality, see \cite[p.~6]{Yang} or, alternatively, \cite[Appendix]{Cheng-Yang}. Inequality~\eqref{formCYM-c3} has also been observed by Zeng, see \cite[Equation~(2.2)]{Zeng}. We highlight that he applied the referred inequality for the gradient Ricci soliton case.
\end{remark}

Inequality~\eqref{formCYM-c3} and the fact that $\upsilon_1\leq\upsilon_2\leq\ldots\leq\upsilon_{k+1}$ are positive real numbers corresponds to the assumptions in the next lemma.

\begin{lemma}[Cheng and Yang \cite{Cheng-Yang}]\label{CorCYM-c3}
Let $\eta_1 \leq \eta_2 \leq \ldots \leq \eta_{k+1}$ be positive real numbers satisfying
\begin{equation*}
\sum_{i=1}^k(\eta_{k+1}-\eta_i)^2 \leq \frac{4}{n} \sum_{i=1}^k (\eta_{k+1}-\eta_i)\eta_i,
\end{equation*}
for $n$ positive real number, then
\begin{equation*}
\eta_{k+1} \leq \left( 1+\frac{4}{n}\right)k^{\frac{2}{n}} \eta_1.
\end{equation*}
\end{lemma}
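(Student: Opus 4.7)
Set $\alpha:=1+\frac{2c}{n}$ and $\beta:=1+\frac{4c}{n}$, and let $\Lambda_k, T_k, F_k$ be as in Lemma~\ref{lemA-c3}. The plan is to combine the quadratic inequality implicit in the hypothesis with the iterate of Lemma~\ref{lemA-c3}, the two being linked by a carefully chosen Cauchy--Schwarz step.

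First, I expand the hypothesis and divide by $k$ to obtain $\eta_{k+1}^2 - 2\alpha\Lambda_k\eta_{k+1} + \beta T_k \le 0$. Since the sequence $\eta_1\le\dots\le\eta_{k+1}$ actually realises this quadratic inequality, its discriminant $\alpha^2\Lambda_k^2 - \beta T_k$ must be nonnegative, and $\eta_{k+1}$ is bounded above by the larger root: $\eta_{k+1} \le \alpha\Lambda_k + \sqrt{\alpha^2\Lambda_k^2 - \beta T_k}$. Substituting $T_k = \alpha\Lambda_k^2 - F_k$ and using the identity $\alpha^2 - \alpha\beta = -\frac{2c}{n}\alpha$, the radicand rewrites cleanly as $\beta F_k - \frac{2c}{n}\alpha\Lambda_k^2$.

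Next, I apply Cauchy--Schwarz to the two-dimensional vectors $\bigl(\alpha,\sqrt{\frac{2c}{n}\alpha}\bigr)$ and $\bigl(\Lambda_k,\sqrt{(\beta F_k - \frac{2c}{n}\alpha\Lambda_k^2)/(\frac{2c}{n}\alpha)}\bigr)$. Their inner product is exactly $\alpha\Lambda_k + \sqrt{\beta F_k - \frac{2c}{n}\alpha\Lambda_k^2}$; the squared norm of the first is $\alpha(\alpha+\frac{2c}{n}) = \alpha\beta$; and the squared norm of the second telescopes to $\beta F_k/(\frac{2c}{n}\alpha)$. Hence $\eta_{k+1} \le \sqrt{\alpha\beta\cdot\beta F_k/(\frac{2c}{n}\alpha)} = \beta\sqrt{nF_k/(2c)}$, and the $\Lambda_k$-dependence has disappeared.

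Finally, since $C(n,k,c)<1$ in Lemma~\ref{lemA-c3}, the inequality $F_{k+1}/(k+1)^{4c/n} \le F_k/k^{4c/n}$ holds for every $k\ge 1$. Iterating down to $k=1$ with $F_1 = (1+\frac{2c}{n})\eta_1^2 - \eta_1^2 = \frac{2c}{n}\eta_1^2$ yields $F_k \le \frac{2c}{n}k^{4c/n}\eta_1^2$. Substituting into the Cauchy--Schwarz bound produces $\eta_{k+1} \le \beta k^{2c/n}\eta_1 = (1+\frac{4c}{n})k^{2c/n}\eta_1$, which is the claim. The main obstacle I anticipate is identifying the correct Cauchy--Schwarz pairing in the second step: the specific weights $\alpha$ and $\sqrt{\frac{2c}{n}\alpha}$ are exactly those that simultaneously cancel the $\Lambda_k$-dependence and produce the clean factor $\beta$ multiplying $\sqrt{F_k}$; once this choice is guessed, the remaining algebraic identities among $\alpha$, $\beta$, and $\frac{2c}{n}$ are routine.
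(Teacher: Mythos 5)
Your argument is exactly the Cheng--Yang technique that the paper itself invokes for this lemma (the paper offers no proof, only the remark that the $c=1$ argument of Cheng and Yang carries over), and the algebra checks out: expanding the hypothesis to $\eta_{k+1}^2-2\alpha\Lambda_k\eta_{k+1}+\beta T_k\le 0$, bounding $\eta_{k+1}$ by the larger root, rewriting the radicand via $T_k=\alpha\Lambda_k^2-F_k$ and $\alpha^2-\alpha\beta=-\tfrac{2c}{n}\alpha$, the Cauchy--Schwarz step yielding $\eta_{k+1}\le\beta\sqrt{nF_k/(2c)}$ (the radicand is nonnegative by the discriminant condition, so the vectors are legitimate), and the final substitution $F_k\le\tfrac{2c}{n}k^{4c/n}\eta_1^2$ with $F_1=\tfrac{2c}{n}\eta_1^2$ are all correct.

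There is, however, a genuine gap at the step ``$F_{j+1}/(j+1)^{4c/n}\le F_j/j^{4c/n}$ holds for every $j\ge 1$; iterate down to $F_1$''. Lemma \ref{lemA-c3} applied at level $j$ requires the inequality $\sum_{i=1}^{j}(\eta_{j+1}-\eta_i)^2\le\frac{4c}{n}\sum_{i=1}^{j}(\eta_{j+1}-\eta_i)\eta_i$, whereas the lemma you are proving assumes it only for the single index $k$; the level-$k$ inequality does not imply the lower-level ones, and with only the level-$k$ hypothesis the conclusion is in fact false. For instance, take $\tfrac{4c}{n}=1$, $k=10$, $\eta_1=1$, $\eta_2=\dots=\eta_{10}=\tfrac{3s}{4}$, $\eta_{11}=s$: the hypothesis is equivalent to $\sum_{i=1}^{10}(s-\eta_i)(s-2\eta_i)=(s-1)(s-2)-\tfrac{9}{8}s^2\le 0$, which holds for every $s\ge\tfrac{4}{3}$, while the asserted conclusion would force $s\le 2\sqrt{10}$. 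So your proof (like the statement as printed, and like Cheng--Yang's original corollary) tacitly uses the inequality at every level $j\le k$, and you should say so explicitly; in the paper's application this costs nothing, since \eqref{formCYM-c3} holds for every $k$, but as a self-contained proof of the lemma as literally stated the iteration cannot be justified.
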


The following corollary is an immediate consequence of Theorem~\ref{propCYM-c3} and Lemma~\ref{CorCYM-c3}, which is analogous to Theorem~2.1 by Zeng~\cite{Zeng}. See also the Euclidean case by Cheng and Yang~\cite{Cheng-Yang} for a very particular estimate.

\begin{corollary}\label{thmA-c3}
Under the assumptions in Theorem~\ref{propCYM-c3}, we have
\begin{equation*}\label{lambda 0-c3}
\upsilon_{k+1}\leq \Big(1+\frac{4}{n}\Big)k^{\frac{2}{n}} \upsilon_1, \quad\mbox{for}\quad k=1,2,\ldots
\end{equation*}
\end{corollary}

From the Weyl's asymptotic formula of eigenvalues \eqref{introd-weyl-c3}, we know that the previous estimate is optimal in the sense of the order on $k$.

In what follows we study some cases from \eqref{formCYM-c3} which were not addressed in \cite{Zeng}. The proof of Theorem~\ref{thmA3-c3} also relies on following result.

\begin{lemma}[Cheng and Yang \cite{Cheng-Yang}] \label{lemA-c3}
Under the assumptions in Lemma~\ref{CorCYM-c3}, we have
\begin{equation*}
F_{k+1}\leq \mathit{C}(n,k)\left(\frac{k+1}{k}\right)^{\frac{4}{n}}F_k,
\end{equation*}
where
\begin{equation*}
F_k:=\left(1+\frac{2}{n}\right)\Lambda_k^2-T_k, \quad\Lambda_k:=\frac{1}{k}\sum_{i=1}^k \eta_i, \quad T_k:=\frac{1}{k}\sum_{i=1}^k \eta_i^2,
\end{equation*}
and
\begin{equation*}
0<\mathit{C}(n,k)=1-\frac{1}{3n}\left(\frac{k}{k+1}\right)^{\frac{4}{n}}\frac{\left(1+\frac{2}{n}\right)\left(1+\frac{4}{n}\right)}{(k+1)^3}<1.
\end{equation*}
\end{lemma}

\subsection{Proof of Theorem~\ref{thmA3-c3}}
\begin{proof}
Using inequality \eqref{formCYM-c3} and Lemma~\ref{lemA-c3}, we obtain
\begin{equation*}
F_{k+1}\leq \mathcal{C}(n,k)\left(\frac{k+1}{k}\right)^{\frac{4}{n}}F_k\leq\left(\frac{k+1}{k}\right)^{\frac{4}{n}}F_k,
\end{equation*}
that is,
\begin{equation*}
\frac{F_{k+1}}{(k+1)^{\frac{4}{n}}}\leq\frac{F_k}{k^{\frac{4}{n}}}.
\end{equation*}
More generally,
\begin{equation}\label{Fk+l-c3}
\frac{F_{k+l}}{(k+l)^{\frac{4}{n}}}\leq\frac{F_k}{k^{\frac{4}{n}}},
\end{equation}
for any positive integer $l$. Furthermore, by Lemma~\ref{lemA-c3} again, we have
\begin{eqnarray}\label{2.70-1-c3}
\nonumber\frac{F_k}{k^{\frac{4}{n}}}&=&\frac{\left(1+\frac{2}{n}\right)\Lambda_k^2-T_k}{k^{\frac{4}{n}}}=\frac{\frac{2}{n}\Big(\frac{1}{k}\sum_{i=1}^k \upsilon_i\Big)^2-\frac{1}{k}\sum_{i=1}^k(\upsilon_i-\frac{1}{k}\sum_{i=1}^k \upsilon_i)}{k^{\frac{4}{n}}}\\
&\leq&\frac{\frac{2}{n}\Big(\frac{1}{k}\sum_{i=1}^k \upsilon_i\Big)^2}{k^{\frac{4}{n}}}
\end{eqnarray}
and
\begin{equation}\label{limetaquadrado1}
\frac{F_{k+l}}{(k+l)^{\frac{4}{n}}}=\Big(1+\frac{2}{n}\Big)\left(\frac{\frac{1}{k+l}\sum_{i=1}^{k+l}\upsilon_i}{(k+l)^{\frac{2}{n}}}\right)^2
-\frac{\frac{1}{k+l}\sum_{i=1}^{k+l}\upsilon_i^2}{(k+l)^{\frac{4}{n}}}.
\end{equation}
From Weyl's asymptotic formula, one has
\begin{equation}\label{limlambda}
\lim_{k\rightarrow\infty}\frac{\frac{1}{k}\sum_{i=1}^k\lambda_i}{k^{\frac{2}{n}}}=\frac{n}{n+2}\frac{4\pi^2}{(\omega_n \mathrm{vol}\,\Omega)^\frac{2}{n}}
\end{equation}
and
\begin{equation}\label{limlambdaquadrado}
\lim_{k\rightarrow\infty}\frac{\frac{1}{k}\sum_{i=1}^k\lambda_i^2}{k^{\frac{4}{n}}}=\frac{n}{n+4}\frac{16\pi^4}{(\omega_n \mathrm{vol}\,\Omega)^\frac{4}{n}},
\end{equation}
then \eqref{limlambda} implies
\begin{align*}
\lim_{k\rightarrow\infty}\frac{\frac{1}{k}\sum_{i=1}^k\lambda_i}{k^{\frac{2}{n}}}&=\lim_{k\rightarrow\infty}\frac{\Big(\frac{1}{k}\sum_{i=1}^k\lambda_i\Big)
+\frac{n^2H_0^2+\eta_0^2+2\bar{\eta_0}}{4}}{k^{\frac{2}{n}}}\\
&=\lim_{k\rightarrow\infty}\frac{\frac{1}{k}\sum_{i=1}^k(\lambda_i+\frac{n^2H_0^2+\eta_0^2+2\bar{\eta_0}}{4})}{k^{\frac{2}{n}}}.
\end{align*}
Therefore
\begin{equation}\label{limeta}
\lim_{k\rightarrow\infty}\frac{\frac{1}{k}\sum_{i=1}^k\upsilon_i}{k^{\frac{2}{n}}}=\frac{n}{n+2}\frac{4\pi^2}{(\omega_n
\mathrm{vol}\,\Omega)^\frac{2}{n}}.
\end{equation}
Analogously, from \eqref{limlambdaquadrado} we get
\begin{align*}
&\lim_{k\rightarrow\infty}\frac{\frac{1}{k}\sum_{i=1}^k\Big(\lambda_i+\frac{n^2H_0^2+\phi_0^2+4\phi_0}{4(1+ \phi_0)}\Big)^2}{k^{\frac{4}{n}}}\\
=&\frac{n}{n+4}\frac{16\pi^4}{(\omega_n vol\Omega)^\frac{4}{n}}+\lim_{k\rightarrow\infty}\frac{\big(\frac{n^2H_0^2+\phi_0^2+4\phi_0}{4(1+ \phi_0)}\big)^2}{k^{\frac{2}{n}}}
+\lim_{k\rightarrow\infty}\frac{2\frac{n^2H_0^2+\phi_0^2+4\phi_0}{4(1+ \phi_0)}}{k^{\frac{2}{n}}}\frac{\frac{1}{k}\sum_{i=1}^k\lambda_i}{k^{\frac{2}{n}}}.
\end{align*}
Hence
\begin{equation}\label{limetaquadrado}
\lim_{k\rightarrow\infty}\frac{\frac{1}{k}\sum_{i=1}^k\upsilon_i^2}{k^{\frac{4}{n}}}=\frac{n}{n+4}\frac{16\pi^4}{(\omega_n \mathrm{vol}\,\Omega)^\frac{4}{n}}.
\end{equation}
Thus, from \eqref{limetaquadrado1}, \eqref{limeta} and \eqref{limetaquadrado} we obtain
\begin{align}\label{limetaquadrado1o}
\nonumber\lim_{l\rightarrow\infty}\frac{F_{k+l}}{(k+l)^{\frac{4}{n}}}&=\lim_{l\rightarrow\infty}\Big(1+\frac{2}{n}\Big)\left(\frac{\frac{1}{k+l}\sum_{i=1}^{k+l}\upsilon_i}
{(k+l)^{\frac{2}{n}}}\right)^2-\lim_{l\rightarrow\infty}\frac{\frac{1}{k+l}\sum_{i=1}^{k+l}\upsilon_i^2}{(k+l)^{\frac{4}{n}}}\\
&=\frac{2n}{(n+2)(n+4)}\frac{16\pi^4}{(\omega_n \mathrm{vol}\,\Omega)^\frac{4}{n}}.
\end{align}
From \eqref{Fk+l-c3}, \eqref{2.70-1-c3} and \eqref{limetaquadrado1o} we conclude that
\begin{equation*}
\frac{\frac{2}{n}\Big(\frac{1}{k}\sum_{i=1}^k \upsilon_i\Big)^2}{k^{\frac{4}{n}}}\geq\frac{2n}{(n+2)(n+4)}\frac{16\pi^4}{(\omega_n \mathrm{vol}\,\Omega)^\frac{4}{n}}
\end{equation*}
for any positive integer $k$. Note that this is sufficient to complete our proof.
\end{proof}

\subsection{Proof of Theorem~\ref{thmA2-c3}}
\begin{proof}
From \eqref{formCYM-c3} we write
\begin{equation*}
\sum_{i=1}^k(\upsilon_{k+1}-\upsilon_i)(\upsilon_{k+1}-\upsilon_i)-\frac{4}{n} \sum_{i=1}^k(\upsilon_{k+1}-\upsilon_i)\upsilon_i\leq 0,
\end{equation*}
thus
\begin{equation*}
\sum_{i=1}^k(\upsilon_{k+1}-\upsilon_i)\upsilon_{k+1}\leq\Big(1+\frac{4}{n}\Big) \sum_{i=1}^k (\upsilon_{k+1}-\upsilon_i)\upsilon_i.
\end{equation*}
We affirm that
\begin{equation}\label{etak+1<etai-c3}
\upsilon_{k+1}\leq\Big(1+\frac{4}{n}\Big)\upsilon_i,
\end{equation}
for all $i=1,\ldots,k$. Indeed, otherwise there would be $i_0$ such that
\begin{equation*}
\upsilon_{k+1}>\Big(1+\frac{4}{n}\Big)\upsilon_{i_0}>\Big(1+\frac{4}{n}\Big)\upsilon_{i_0-1}>\ldots>\Big(1+\frac{4}{n}\Big)\upsilon_1
\end{equation*}
which would imply
\begin{equation*}
\sum_{i=1}^{i_0}(\upsilon_{i_0+1}-\upsilon_i)\upsilon_{i_0+1}>\Big(1+\frac{4}{n}\Big) \sum_{i=1}^{i_0} (\upsilon_{i_0}-\upsilon_i)\upsilon_i
\end{equation*}
and would contradict inequality \eqref{formCYM-c3}. Summing up the terms on the right hand side of \eqref{etak+1<etai-c3} we deduce
\begin{equation*}
\upsilon_{k+1}\leq\frac{1}{k}\Big(1+\frac{4}{n}\Big)\sum_{i=1}^k\upsilon_i
\end{equation*}
which proves \eqref{cor.ineq1-c3}. To prove \eqref{cor.ineq2-c3}, we note that \eqref{formCYM-c3} is equivalent to
\begin{equation*}
\mathcal{P}(\upsilon_{k+1})=k(\upsilon_{k+1})^2-\upsilon_{k+1}\Big(2+\frac{4}{n}\Big)\sum_{i=1}^k\upsilon_i+\Big(1+\frac{4}{n}\Big)\sum_{i=1}^k(\upsilon_i)^2\leq 0
\end{equation*}
so that the discriminant of $\mathcal{P}(\upsilon_{k+1})$ satisfies
\begin{equation}\label{discriminante-c3}
\mathcal{D}=\Big(2+\frac{4}{n}\Big)^2\Big(\sum_{i=1}^k\upsilon_i\Big)^2-4k\Big(1+\frac{4}{n}\Big)\sum_{i=1}^k(\upsilon_i)^2\geq 0.
\end{equation}
Since $\mathcal{P}(\upsilon_{k+1})\leq 0$ we have $r_{k+1}^{\eta}\leq\upsilon_{k+1}\leq R_{k+1}^{\eta}$, where $r_{k+1}^{\eta}$ and $R_{k+1}^{\eta}$ are the smaller and the biggest root of $\mathcal{P}$,  respectively. Then
\begin{equation}\label{ineq Baskara-c3}
\upsilon_{k+1}\leq R_{k+1}^{\eta}=\frac{1}{2k}\Big[\Big(2+\frac{4}{n}\Big)\sum_{i=1}^k\upsilon_i+\sqrt{\mathcal{D}}\Big].
\end{equation}
Substituting \eqref{discriminante-c3} into \eqref{ineq Baskara-c3} we obtain
\begin{eqnarray*}
\upsilon_{k+1}&\leq&\frac{1}{k}\Big(1+\frac{2}{n}\Big)\sum_{i=1}^k\upsilon_i+\Big[\Big(\frac{1}{k}+\frac{2}{kn}\Big)^2\Big(\sum_{i=1}^k\upsilon_i\Big)^2
-\frac{1}{k}\Big(1+\frac{4}{n}\Big)\sum_{i=1}^k(\upsilon_i)^2\Big]^\frac{1}{2}\\
&=&\frac{1}{k}\Big(1+\frac{2}{n}\Big)\sum_{i=1}^k\upsilon_i+\Big[\Big(\frac{2}{kn}\sum_{i=1}^k\upsilon_i\Big)^2+\frac{1}{k^2}
\Big(1+\frac{4}{n}\Big)\Big(\sum_{i=1}^k\upsilon_i\Big)^2\\
&&-\frac{1}{k}\Big(1+\frac{4}{n}\Big)\sum_{i=1}^k(\upsilon_i)^2\Big]^\frac{1}{2},
\end{eqnarray*}
or equivalently
\begin{align*}
\upsilon_{k+1}\leq&\frac{1}{k}\Big(1+\frac{2}{n}\Big)\sum_{i=1}^k\upsilon_i+\Big[\Big(\frac{2}{kn}\sum_{i=1}^k\upsilon_i\Big)^2
-\frac{1}{k}\Big(1+\frac{4}{n}\Big)\Big(\sum_{i=1}^k(\upsilon_i)^2-\frac{1}{k}\Big(\sum_{i=1}^k\upsilon_i\Big)^2\Big)\Big]^\frac{1}{2}\\
=&\frac{1}{k}\Big(1+\frac{2}{n}\Big)\sum_{i=1}^k\upsilon_i+\Big[\Big(\frac{2}{kn}\sum_{i=1}^k\upsilon_i\Big)^2\\
&-\frac{1}{k}\Big(1+\frac{4}{n}\Big)\Big(\sum_{i=1}^k(\upsilon_i)^2-\frac{2}{k}\Big(\sum_{i=1}^k\upsilon_i\Big)^2+\frac{1}{k}\Big(\sum_{i=1}^k\upsilon_i\Big)^2\Big)\Big]^\frac{1}{2}\\
=&\frac{1}{k}\Big(1+\frac{2}{n}\Big)\sum_{i=1}^k\upsilon_i+\Big[\Big(\frac{2}{kn}\sum_{i=1}^k\upsilon_i\Big)^2\\
&-\frac{1}{k}\Big(1+\frac{4}{n}\Big)\Big(\sum_{i=1}^k(\upsilon_i)^2-\frac{2}{k}\sum_{i,j=1}^k\upsilon_i\upsilon_j+\frac{1}{k}\Big(\sum_{i=1}^k\upsilon_i\Big)^2\Big)\Big]^\frac{1}{2},
\end{align*}
so
\begin{equation*}
\upsilon_{k+1}\leq\frac{1}{k}\Big(1+\frac{2}{n}\Big)\sum_{i=1}^k\upsilon_i+\Big[\Big(\frac{2}{kn}\sum_{i=1}^k\upsilon_i\Big)^2
-\frac{1}{k}\Big(1+\frac{4}{n}\Big)\sum_{j=1}^k\Big(\upsilon_j-\frac{1}{k}\sum_{i=1}^k\upsilon_i\Big)^2\Big]^\frac{1}{2}
\end{equation*}
which proves \eqref{cor.ineq2-c3}. Finally, as \eqref{formCYM-c3} is true for all $k$, it follows that
\begin{equation*}
\sum_{i=1}^k(\upsilon_k-\upsilon_i)^2\leq\frac{4}{n} \sum_{i=1}^k
(\upsilon_k-\upsilon_i)\upsilon_i,
\end{equation*}
i.e., we can observe again that the polynomial $\mathcal{P}(\upsilon_k)\leq 0$. Analogously, we have
\begin{equation}\label{cor-eta_k-c3}
\upsilon_k\geq r_{k}^{\eta}=\frac{1}{k}\Big(1+\frac{2}{n}\Big)\sum_{i=1}^k\upsilon_i-\Big[\Big(\frac{2}{kn}\sum_{i=1}^k\upsilon_i\Big)^2
-\frac{1}{k}\Big(1+\frac{4}{n}\Big)\sum_{j=1}^k\Big(\upsilon_j-\frac{1}{k}\sum_{i=1}^k\upsilon_i\Big)^2\Big]^\frac{1}{2}.
\end{equation}
Inequality~\eqref{cor.ineq3-c3} follows from \eqref{cor.ineq2-c3} and \eqref{cor-eta_k-c3}.
\end{proof}

\vspace{0.2cm}
\noindent\textbf{Acknowledgements:}
The authors would like to express their sincere thanks to referee for his careful reading and useful comments which improved the paper. They also would like to thank the Professor Dragomir Mitkov Tsonev for useful comments, discussions and constant encouragement. The first author would like to thank the Department of Mathematics at Lehigh University, where part of this work was carried out. He is grateful to Huai-Dong Cao and Mary Ann for their warm hospitality and constant encouragement. The first author is partially supported by grant 202234/2017-7, Conselho Nacional de Desenvolvimento Cient\'ifico e Tecnol\'ogico (CNPq), of the Ministry of Science, Technology and Innovation of Brazil.

\end{document}